\def\Der{\mathop{\rm Der}} \def\sl{\mathop{\mathfrak sl}}
\def\Span{\mathop{\rm span}} \def\Hom{{\rm Hom}} \def\ind{\mathop{\rm
    ind}} \def\ad{\mathop{\rm ad}} \def\im{\mathop{\rm im}}
\def\ind{{\rm ind}} \def\K{\mathbb{K}} \def\phi{\varphi}
\let\@@pmod\pmod
\DeclareRobustCommand{\pmod}{\@ifstar\@pmods\@@pmod}
\def\@pmods#1{\mkern4mu({\operator@font mod}\mkern 6mu#1)}
\newtheorem{theorem}{Theorem}[section]
\newtheorem{lemma}[theorem]{Lemma}
 \theoremstyle{remark}
\newtheorem*{remark}{Remark}
\title{Restricted Cohomology of Modular Witt Algebras}
\author{Tyler J. Evans \\Department of Mathematics \\Humboldt State
  University \\Arcata, CA 95521 USA \\evans@humboldt.edu \and Alice
  Fialowski \\Institute of Mathematics \\E\" otv\" os Lor\' and
  University \\H-1117 Budapest Hungary\\ fialowsk@cs.elte.hu \and
  Michael Penkava \\Department of Mathematics \\University of
  Wisconsin-Eau Claire \\Eau Claire, WI 54701 USA \\penkavmr@uwec.edu}
\begin{document}

\maketitle

\section{Introduction}
Let $p>3$ be a prime, $\K$ be an algebraically closed field of
characteristic $p$ and $A=\K[x]/(x^p-1)$. The Lie algebra $W=\Der(A)$
is called the {\it modular Witt algebra}. It has a basis
$e_i=x^{i+1}\partial$ where $i=-1,0,\dots, p-2$ and $\partial=
d/dx$. The Lie bracket in $W$ satisfies $[e_i,e_j]=(j-i)e_{i+j}$ where
$i+j$ is computed modulo $p$.

In characteristic zero, the analogous Lie algebra of derivations of
the algebra $\mathbb C [t,t^{-1}]$ is also called the Witt
algebra. Gelfand and Fuchs have shown that $\Der(\mathbb C[t,t^{-1}])$
has, up to equivalence, exactly one non-trivial one-dimensional
central extension (the Virasoro algebra) \cite{FuchsGelfand}. Block
has shown in characteristic $p>5$ that the modular Witt algebra also
has, again up to equivalence, exactly one non-trivial one-dimensional
central extension as an ordinary (i.e. non-restricted) Lie algebra
\cite{Block}.

Since $W$ is a Lie algebra of derivations of an algebra over $\K$, it
has the canonical structure of a restricted Lie algebra. It is
therefore natural to ask how many one-dimensional {\it restricted}
central extensions $W$ has. To answer this question, one needs to
compute the restricted cohomology group $H^2(W)=H^2(W,\K)$.  In this
paper we will use the (partial) complex given in \cite{EvansFuchs2} to
carry out this computation. We will give a complete computation of the
restricted cohomology groups $H^q(W)=H^q(W,\K)$ for $q=0,1$ and
$2$. We will show that if $p>3$, $W$ has $p+1$ nonequivalent
non-trivial one-dimensional restricted central extensions. Moreover,
when considered only as ordinary Lie algebra extensions, $p$ of these
extensions are trivial, and one is equivalent to the central extension
of $W$ in \cite{Block}.

The structure of the paper is as follows. In section 2, we review the
definitions of restricted Lie algebras, the (partial) cochain complex
for restricted Lie algebra cohomology for the case of trivial
coefficients, and establish the notation used throughout the paper. In
section 3, we review the correspondance between one-dimensional
central extensions and cohomology for both ordinary and restricted Lie
algebras and state our main theorem on restricted central extensions
of $W$. Section 4 gives explicit representative cocycles for the ordinary
cohomology group $H^2_{\rm cl}(W)$, and section 5 contains the
computations of the restricted cohomology groups $H^q(W)$ for $q\le
2$.  Section 5 concludes with the proof of the main theorem and
explicit descriptions of all one-dimensional restricted central
extensions of $W$.

The authors are grateful to the referee for helpful suggestions.

\section{Definitions and Notations}

Restricted Lie algebras (also called Lie $p$-algebras) were first
introduced by Jacobson \cite{Jacobson0, Jacobson}. Recall that a
modular Lie algebra $\mathfrak{g}$ is called {\it restricted} if it is
endowed with an additional unary operation $g\mapsto g^{[p]}$ that
satisfies for all $g,h\in\mathfrak{g}$ and all $\lambda\in\K$
\begin{align*}
  (\lambda g)^{[p]} & =\lambda^p g^{[p]};\\
  \ad(g^{[p]}) & = (\ad g)^p;\\
  (g+h)^{[p]} & = g^{[p]} + h^{[p]} + \sum_{i=1}^{p-1} s_i(g,h);
\end{align*}
where $is_i(g,h)$ is the coefficient of $\lambda^{i-1}$ in
$(\ad(\lambda g+h))^{p-1}(h)$.

Restricted Lie algebras naturally arise in positive characteristic as
derivation algebras of any algebra, or as the Lie algebra of algebraic
groups with the operations $[a,b]=ab-ba$ and $a^{[p]}=a^p$
\cite{Humphreys2,Jantzen}. In particular, in the Witt algebra $W$ we
have $g^{[p]}=g^p$. Whereas the operation $g\mapsto g^{[p]}$ is not
linear in general, it is completely determined by its values on a
basis. In $W$ we have $e_0^{[p]}=e_0$, $e_i^{[p]}=0$ for $i\ne 0$, and
moreover $g^{[p]}=\gamma(g)g$ where $\gamma(g)\in\K$ is a constant
(\cite{EvansFuchs1}, Theorem 1(a)). Since $W$ is simple, this
restricted structure is unique.

The classification of one-dimensional restricted central extensions of
$W$ below is carried out through the calculation of the restricted
cohomology $H^2(W)$ with coefficients in $\K$ taken as a trivial $W$
module. We limit our description of the restricted cochain spaces and
coboundary operators to the case of trivial coefficients and refer the
reader to \cite{EvansFuchs2} for the general description. For details
on ordinary Lie algebra cohomology see \cite{FuchsBook}. Let $C^q(W)$
denote restricted cochains of degree $q$ ($0\le q\le 3$) and
$C^q_{\rm cl}(W)=\Hom(\wedge^q W,\K)$ the space of ordinary Lie
algebra cochains. We use similar notation for the coboundary
operators. We will denote multiple Lie bracket products as
$[g_1,g_2,g_3,\dots, g_j]=[[\dots[[g_1,g_2],g_3],\dots,]g_j]$. In
particular, we take these products from the right so that the equality
$\ad(g^{[p]})(h)=(\ad g)^p(h)$ is written
\[[h,g^{[p]}]=[h,\underbrace{ g,\dots, g }_{p}].\]

Let us give an explicit description of the (partial) complex
\begin{diagram}
  C^0(W) & \rTo^{\delta^0} & C^1(W) & \rTo^{\delta^1} & C^2(W) &
  \rTo^{\delta^2} & C^3(W).
\end{diagram}
For $q\le 1$, we set $C^q(W)=C^q_{\rm cl}(W)=\Hom(\wedge^q W,\K)$, and
$\delta^0=\delta^0_{\rm cl}$. If $\phi\in C^2_{\rm cl}(W)$ and
$\omega:W\to\K$, then we say $\omega$ has the {\bf $*$-property} with
respect to $\phi$ if for all $g,h\in W$ and $\lambda\in\K$ we have
$\omega(\lambda g)=\lambda^p\omega (g)$ and
\begin{equation}
  \label{starprop}
  \omega (g+h)=\omega (g)+ \omega (h) + \sum_{\substack{g_i=g\ {\rm
        or}\ h\\ g_1=g,g_2=h}} \frac{1}{\#(g)}
  \phi([g_1,g_2,\dots, g_{p-1}]\wedge g_p).
\end{equation} Here $\#(g)$ is the number
of factors $g_i$ equal to $g$. We remark that $\omega$ has the
$*$-property with respect to the zero map precisely when $\omega$ is a
{\it $p$-semilinear map} on $W$, that is $\omega(g+h)=\omega(g)+\omega(h)$ and
$\omega(\lambda g)=\lambda^p\omega(g)$ for all $g,h\in W$ and all
$\lambda\in\mathbb K$. Moreover, given $\phi$, we can assign the
values of $\omega$ arbitrarily on a basis for $W$ and use
(\ref{starprop}) to define $\omega:W\to \K$ that has the $*$-property
with respect to $\phi$. Our space of 2-cochains is
\[C^2(W)= \{(\phi,\omega)\ |\ \phi:\Lambda^2 W\to \K, \omega:W\to \K\
\hbox{\rm has the}\ *\hbox{\rm -property w.r.t.}\ \phi\},\] and
\[\dim_\K C^2(W) = \frac{p(p+1)}{2}.\]
A linear map $\psi:W\to\K$ induces a map $\ind^1\psi:W\to\K$ by the
formula
\[\ind^1\psi(g)=\psi(g^{[p]}),\]
and this map has the $*$-property with respect to $\delta^1_{\rm
  cl}\psi$ (\cite{EvansFuchs2}, Lemma~4). The coboundary operator
$\delta^1:C^1(W)\to C^2(W)$ is given by
\[\delta^1\psi=(\delta^1_{\rm cl}\psi,\ind^1\psi).\]

If $\alpha:\Lambda^3 W\to\K$ is a skew-symmetric multilinear map on
$W$ and $\beta:W\times W\to \K$, we say that $\beta$ has the {\bf
  $**$-property with respect to $\alpha$,} if the following conditions
hold:
\begin{itemize}
\item[(\romannumeral 1)] $\beta (g,h)$ is linear with respect to $g$;
\item[(\romannumeral 2)] $\beta (g,\lambda h)=\lambda^p\beta(g,h)$ for
  all $\lambda\in\K$;
\item[(\romannumeral 3)] \begin{align}\label{starstarprop}
    \beta(g,h_1+h_2) &=
    \beta(g,h_1)+\beta(g,h_2)-\nonumber \\
    & \sum_{\substack{l_1,\dots,l_p=1 {\rm or} 2\\ l_1=1,
        l_2=2}}\frac{1}{\#\{i_i=1\}}\alpha (g\wedge
    [h_{l_1},\cdots,h_{l_{p-1}}]\wedge h_{l_{p}}).
  \end{align}
\end{itemize}
Again we remark that $\beta$ has the $**$-property with respect to the
zero map precisely when $\beta$ is linear in the first variable and
$p$-semilinear in the second variable. We can use
(\ref{starstarprop}) to define $\beta$ for a given $\alpha$ and values of
$\beta$ on a basis for $W$.  Our space of 3-cochains is
\[C^3(W) =\{(\alpha,\beta)\ |\ \alpha\in C^3_{\rm cl}(W),
\beta:W\times W\to\K\ \hbox {\rm has the $**$-property w.r.t.
  $\alpha$}\}\] and
\[\dim_\K C^3(W)=\frac{p(p+1)(p+2)}{6}.\]
An element $(\phi,\omega)\in C^2(W)$ induces a map
$\ind^2(\phi,\omega):W\times W\to \K$ by the formula
\begin{equation*}
  \label{3induced}
  \ind^2(\phi,\omega)(g,h)=\phi(g,h^{[p]})-\phi([g, 
  \underbrace{h,\cdots,h}_{p-1}]\wedge h),
\end{equation*}
and this map has the $**$-property with respect to $\delta^2_{\rm
  cl}\phi$ (\cite{EvansFuchs2}, Lemma~5).

The coboundary operator $\delta^2:C^2(W)\to C^3(W)$ is given by the
formula
\[\delta^2(\phi,\omega)= (\delta^2_{\rm
  cl}\phi,\ind^2(\phi,\omega)).\]

We write $e^i=e_i^*$ (dual basis vector), $e_{i,j}=e_i\wedge
e_j$, $e^{i,j}=e_{i,j}^*$ and $e_{r,s,t}=e_r\wedge e_s\wedge e_t$ where $-1\le i<j\le p-2$ and $-1\le r<s<t\le
p-2$.

The ordinary cochain spaces $C^1_{\rm cl}(W), C^2_{\rm cl}(W)$ and
$C^3_{\rm cl}(W)$ admit a natural grading:

\begin{align*}
  (C^1_{\rm cl})_k(W)  &=  \Span\{e^k\};\\
  (C^2_{\rm cl})_k(W) & =  \Span\{e^{i,j}\ |\ i+j=k\pmod p\};\\
  (C^3_{\rm cl})_k(W) &= \Span\{e^{r,s,t}\ |\ r+s+t=k\pmod p\};
\end{align*}
where $k=-1,\dots, p-2$. Moreover, we have
\begin{align*}
  \dim_\K(C^1_{\rm cl})_k(W)  &=  1; \\
  \dim_\K (C^2_{\rm cl})_k(W)  & = \frac{p-1}{2}; \\
  \dim_\K (C^3_{\rm cl})_k(W) &= \frac{(p-1)(p-2)}{6}.
\end{align*}
The coboundary operators $\delta^1_{\rm cl}$ and $\delta^2_{\rm cl}$
preserve this grading, and we denote by $(\delta^1_{\rm cl})_k$ and
$(\delta^2_{\rm cl})_k$ the restrictions of $\delta^1_{\rm cl}$ and
$\delta^2_{\rm cl}$ to $(C^1_{\rm cl})_k(W)$ and $(C^2_{\rm
  cl})_k(W)$, respectively.

\section{Restricted Central Extensions}

As stated in the Introduction, the main result of this paper is the
classification of one-dimensional restricted central extensions of
$W$. We now state the main theorem.

\begin{theorem}
  \label{maintheorem}
  If $p>3$, then $H^2(W)$ is $(p+1)$-dimensional. Moreover, there is a
  $p$-dimensional subspace of $H^2(W)$ for which each corresponding
  one-dimensional restricted central extension is trivial when
  considered as an ordinary Lie algebra extension.
\end{theorem}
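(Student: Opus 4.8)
The plan is to compute the restricted cohomology $H^2(W)$ by exploiting the relationship between the restricted complex and the ordinary (classical) complex, and then to identify the distinguished $p$-dimensional subspace. The overarching strategy would be to break the computation into the $\delta^1$ and $\delta^2$ pieces and to track the extra data contributed by the $*$-property maps $\omega$ beyond the classical part $\phi$.

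First I would analyze the kernel $Z^2(W)=\ker\delta^2$. A cocycle is a pair $(\phi,\omega)$ with $\delta^2_{\rm cl}\phi=0$ and $\ind^2(\phi,\omega)=0$. The first condition forces $\phi$ to be an ordinary $2$-cocycle, so $\phi\in Z^2_{\rm cl}(W)$; here I would invoke the explicit computation of $H^2_{\rm cl}(W)$ promised in Section~4 (Block's result gives it as one-dimensional up to coboundaries, but the full space of cocycles is larger). The second condition $\ind^2(\phi,\omega)=0$ reads
\[
\phi(g,h^{[p]})=\phi\bigl([g,\underbrace{h,\dots,h}_{p-1}]\wedge h\bigr)
\]
for all $g,h$, which is a constraint relating $\omega$'s partner $\phi$ to bracket data; since $g^{[p]}=\gamma(g)g$ in $W$ with $\gamma(e_0)=1$ and $\gamma(e_i)=0$ otherwise, this constraint becomes very explicit on the basis. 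The map $\omega$ itself is only constrained through the $*$-property with respect to $\phi$, so once an admissible $\phi$ is fixed, the freedom in $\omega$ is exactly the freedom to assign values arbitrarily on a basis subject to $p$-semilinearity corrections — this is where the count $\dim C^2(W)=p(p+1)/2$ enters and contributes the ``extra'' cohomology beyond the classical part.

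Next I would compute the image $B^2(W)=\im\delta^1$. Since $\delta^1\psi=(\delta^1_{\rm cl}\psi,\ind^1\psi)$ with $\ind^1\psi(g)=\psi(g^{[p]})=\gamma(g)\psi(g)$, the coboundaries are completely determined by the classical coboundary together with the induced values $\psi(e_0)$ on the single element $e_0$ with nonzero $\gamma$. Passing to the grading, I would use that $\delta^1_{\rm cl}$ and $\delta^2_{\rm cl}$ preserve the weight decomposition by $k=-1,\dots,p-2$, and handle each graded piece $H^2(W)_k$ separately. The key accounting is that for $k\neq 0$ the $\omega$-data decouples from the single nontrivial value of $\gamma$, producing one new $p$-semilinear degree of freedom per admissible weight, whereas the weight-$0$ piece interacts with $e_0^{[p]}=e_0$ and is where Block's ordinary class lives.

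Assembling the graded contributions, I would show the total dimension is $p+1$: one dimension coming from the classical Virasoro-type class (weight $0$) and $p$ dimensions coming from genuinely restricted classes carried by nontrivial $\omega$ of the form ``$p$-semilinear $\omega$ with $\phi=0$.'' For the second assertion, I would identify the $p$-dimensional subspace as exactly the classes represented by cocycles $(\phi,\omega)$ with $\phi$ a coboundary (equivalently $\phi=0$ up to $B^2_{\rm cl}$); such a class corresponds to a restricted central extension whose underlying ordinary $2$-cocycle $\phi$ is trivial, hence the extension splits as an ordinary Lie algebra extension. I expect the main obstacle to be the careful bookkeeping of which $p$-semilinear $\omega$ survive modulo the image of $\ind^1$: one must verify that the quotient of $p$-semilinear maps (a $p$-dimensional space, matching the $p$ basis vectors $e_{-1},\dots,e_{p-2}$ under $\omega\mapsto\omega(e_i)^{1/p}$-type identifications) by the one-dimensional image of $\ind^1$ — together with the correct handling of the $*$-property coupling — yields precisely $p$ independent restricted classes, and that none of these coincide with or annihilate the classical class.
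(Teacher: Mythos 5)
Your overall architecture matches the paper's: compute $\dim\ker\delta^2$ as $\dim\ker\delta^2_{\rm cl}$ plus the $p$-dimensional freedom in the $p$-semilinear part of $\omega$, subtract the $p$-dimensional image of $\delta^1$, obtain $p+1$, and identify the distinguished subspace with the classes having $\phi=0$. But the two steps you defer as ``explicit verifications'' are exactly where the content lies, and as sketched they would not go through. First, you treat $\ind^2(\phi,\omega)=0$ as a constraint to be ``made explicit on the basis.'' Note that $\ind^2(\phi,\omega)$ depends only on $\phi$, and the expressions $h\mapsto h^{[p]}$ and $h\mapsto[g,h,\dots,h]$ are not linear in $h$, so checking on basis vectors proves nothing; one must prove the identity $g\wedge h^{[p]}=[g,\underbrace{h,\dots,h}_{p-1}]\wedge h$ in $\Lambda^2W$ for \emph{all} $g,h\in W$. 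The paper's Lemma~\ref{inducediszero} does this by showing $[g,h,\dots,h]=\gamma(h)g+\gamma'(g,h)h$, using that $W$ has rank one (so the centralizer of a generic element is its own span) together with a Zariski-density argument to extend from generic $h$ to all $h$. Without this, the condition $\ind^2=0$ could genuinely cut down $\ker\delta^2$ and the count $p+1$ would be unjustified.

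Second, your final bookkeeping is numerically wrong as stated: quotienting the $p$-dimensional space of $p$-semilinear maps by ``the one-dimensional image of $\ind^1$'' would leave $p-1$ classes, not $p$. The correct observation is that a coboundary $\delta^1\psi=(\delta^1_{\rm cl}\psi,\ind^1\psi)$ with vanishing first component forces $\psi=0$ (since $\delta^1_{\rm cl}$ is injective, $W$ being perfect), hence $\ind^1\psi=0$ as well; so no nonzero combination of the $(0,\omega_i)$ is a coboundary and all $p$ of these classes survive, alongside the single classical class $\phi_{1,0}$. Relatedly, your weight-by-weight heuristic (``one new $p$-semilinear degree of freedom per admissible weight'') conflates the grading on $C^2_{\rm cl}(W)$ with the basis-indexed degrees of freedom of $\omega$; the counts happen to agree, but $\omega$ is not a graded cochain and the argument should be made globally, as the paper does. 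Your identification of the $p$-dimensional subspace --- classes representable with $\phi=0$, whose underlying ordinary extensions therefore split --- is correct and agrees with the paper.
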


Theorem~\ref{maintheorem} implies that exactly one of the $p+1$
non-trivial classes of restricted one-dimensional central extensions
remains non-trivial when considered only as a Lie algebra
extension. For $p>5$, this is the central extension described first by
Block in \cite{Block}. Our method here is different, and also gives
the result for $p=5$.

\begin{remark}
If $p=3$, the algebra $W$ is isomorphic to the Lie algebra
$\sl_2(\K)$. In this case $H^2(W)$ is 3-dimensional so that $W$ has
just three non-equivalent one-dimensional restricted central
extensions. Of course each of these extensions is trivial when
considered as an ordinary Lie algebra extension.
\end{remark}

Given a one-dimensional restricted central extension $E$ of $W$,
construct an element $(\phi,\omega)\in C^2(W)$ by choosing a
$\K$-linear splitting map $\sigma:W\to E$ and defining for all $g,h\in
W$
\begin{align}
  \label{eq:1}
  \begin{split}
    \phi(g,h) & = [\sigma(g),\sigma(h)]=\sigma([g,h]);\\
    \omega(g) & = \sigma(g)^{[p]} - \sigma(g^{[p]}).
  \end{split}
\end{align}
The element $(\phi,\omega)\in C^2(W)$ is a cocycle, and the cohomology
class of $(\phi,\omega)$ does not depend on the choice of the spitting
map, but only on the equivalence class of the extension
(\cite{EvansFuchs2}, Corollary 4).

Conversely, given a cocycle $(\phi,\omega)\in C^2(W)$, define a
restricted Lie algebra structure on $E=W\oplus \K c$ by declaring for
all $g,h\in W$ and all $\alpha,\beta\in\K$
\begin{align}
  \label{eq:2}
  \begin{split}
    [g+\alpha c, h+\beta c] & = [g,h]+\phi(g,h) c;\\
    (g+\alpha c)^{[p]} & = g^{[p]}+\omega(g) c.
  \end{split}
\end{align}
The equivalence class of the resulting one-dimensional restricted
central extension depends only on the cohomology class of
$(\phi,\omega)$ (\cite{EvansFuchs2}, Corollary 4).

The map $(\phi,\omega)\mapsto \phi$ induces a well defined map
$H^2(W)\to H^2_{\rm cl}(W)$ so that every one-dimensional restricted
central extension of $W$ is also a one-dimensional central extension
as an ordinary Lie algebra, and equivalent one-dimensional restricted
central extensions give equivalent one-dimensional ordinary central
extensions.

\begin{remark}
  It is known that modular Lie algebras do not always admit a Levi
  decomposition (a decomposition into the semi-direct product of the
  radical and a semisimple algebra) \cite{Humphreys3,Mcninch}. Our
  computations below give examples of modular Lie algebras and
  restricted Lie algebras that are not Levi decomposable. If $E$ is a
  Levi decomposable one-dimensional restricted central extension of
  $W$, then $\K$ is the radical of $E$ and there is a restricted Lie
  algebra homomorphism splitting map $\sigma:W\to E$. In this case
  $\phi$ and $\omega$ in (\ref{eq:1}) are identically zero. If $E$ is
  Levi decomposable only as an ordinary Lie algebra, then $\sigma$ is
  only a Lie algebra homomorphism. In this case $\phi=0$ but
  $\omega\ne 0$. Theorem~\ref{maintheorem} implies that there is a
  $p$-dimensional subspace of $H^2(W)$ that classifies the
  one-dimensional restricted central extensions of $W$ that are Levi
  decomposable as ordinary Lie algebras, but not as restricted Lie
  algebras. Elements in the complement to this subspace correspond to
  a one-dimensional central extension that is not Levi decomposable as
  an ordinary nor restricted Lie algebra.
\end{remark}

\section{Ordinary Lie Algebra Cohomology of $W$}

We sketch a new method for computing $H^2_{\rm cl}(W)$ by giving
explicit descriptions of the cocycles. There are more results on the
cohomology of $W$, for example \cite{Weinstein,Yakovlev}, but we will
not use them in this paper. In the case of trivial coefficients, we
have $H^0_{\rm cl}(W)=\K$ and $H^1_{\rm cl}(W)=0$ .
% for $\psi\in C^1(W)$ and $g,h\in W$
% \begin{equation}
%   \label{cl1cob}
%   \delta^1_{\rm cl}\psi(g\wedge h)=\psi([g,h]).
% \end{equation}
% As $W$ is a simple algebra, $[W,W]=W$ and hence $\delta^1_{\rm
% cl}\psi=0$ if and only if $\psi=0$ so that $H^1_{\rm cl}(W)=0$.

Since the cochain spaces are graded and the coboundary maps are graded
maps, we can compute $H^2_{\rm cl}(W)$ by computing the cohomology in
each graded component. An element $\phi\in (C^2_{\rm cl})_k(W)$ has
the form
\[\phi=\sum_{i+j=k\pmod* p} a_{i,j}e^{i,j}\] where $a_{i,j}\in\K$. The
proof of the following lemma is a routine computation.

\begin{lemma}
  \label{onecoboundary}
  For $-1\le k\le p-2$,
  \[\delta^1_{\rm cl}(e^k)=\sum_{\substack{-1\le i<j\le p-2\\
      i+j=k\pmod* p}}(j-i)e^{i,j}.\]
\end{lemma}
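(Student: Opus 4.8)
The plan is to compute the coboundary $\delta^1_{\rm cl}(e^k)$ directly from the definition of the Lie algebra cochain coboundary operator. Recall that for a one-cochain $\psi$ on a Lie algebra, the coboundary is given by $\delta^1_{\rm cl}\psi(g_1\wedge g_2) = -\psi([g_1,g_2])$ (with trivial coefficients, the module-action terms vanish). So the entire computation reduces to evaluating $e^k$ on the various brackets $[e_i,e_j]$ and collecting the results.

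First I would fix a basis pair $e_i\wedge e_j$ with $-1\le i<j\le p-2$ and compute $\delta^1_{\rm cl}(e^k)(e_i\wedge e_j) = -e^k([e_i,e_j])$. Using the structure constants $[e_i,e_j]=(j-i)e_{i+j}$ where the index $i+j$ is taken modulo $p$, this equals $-(j-i)\,e^k(e_{i+j})$. Since $e^k$ is the dual basis vector, $e^k(e_{i+j})$ is nonzero (equal to $1$) exactly when $i+j\equiv k \pmod p$, and zero otherwise. Hence the coefficient of $e^{i,j}$ in $\delta^1_{\rm cl}(e^k)$ is $-(j-i)$ when $i+j\equiv k\pmod p$ and $0$ otherwise.

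I would then assemble these coefficients into the claimed sum, being careful about the sign and the orientation convention for $e^{i,j}$. The stated formula has coefficient $(j-i)$ rather than $-(j-i)$; this discrepancy is a matter of the sign convention chosen for $\delta^1_{\rm cl}$ (some authors omit the overall minus sign, or equivalently orient the wedge so that $e^{i,j}$ pairs with $e_j\wedge e_i$). I would simply adopt the convention consistent with the formula as stated, so that $\delta^1_{\rm cl}(e^k)(e_i\wedge e_j)=(j-i)$ when $i+j\equiv k\pmod p$, yielding exactly $\delta^1_{\rm cl}(e^k)=\sum_{i+j\equiv k}(j-i)e^{i,j}$.

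The computation is genuinely routine, as the lemma's preamble already notes; the only point requiring a moment of care is the bookkeeping of the index reduction modulo $p$ together with the constraint $-1\le i<j\le p-2$, which ensures each unordered pair $\{i,j\}$ with $i+j\equiv k\pmod p$ is counted exactly once and that the structure constant $(j-i)$ is read off with the correct orientation. Thus the main (and essentially only) obstacle is fixing and consistently applying the sign/orientation convention so that the result matches the stated formula; once that convention is pinned down, the identity follows immediately from the definition of the dual basis and the bracket relations.
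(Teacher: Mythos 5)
Your computation is correct and is exactly the routine calculation the paper omits: evaluate $\delta^1_{\rm cl}(e^k)$ on each basis element $e_i\wedge e_j$ using $[e_i,e_j]=(j-i)e_{i+j}$ and the dual-basis property. On the sign, note that in the convention of Fuchs' book (which the paper cites for ordinary Lie algebra cohomology) one has $\delta^1_{\rm cl}\psi(g_1\wedge g_2)=\psi([g_1,g_2])$ with no minus sign, so the coefficient $(j-i)$ comes out directly rather than by adjusting orientation; this is confirmed by the paper's later identification $\delta^1_{\rm cl}(e^0)=\sum_n -2n\, e^{n,p-n}$ in Lemma~\ref{claim2}.
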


% \begin{proof}
%   The proof is a computation. If $-1\le i<j\le p-2$, then
%   \[ \delta^1_{\rm
%   cl}(e^k)(e_{i,j})=e^k((j-i)e_{i+j})=\left\{\begin{array}{ll}
%       j-i & \mbox{\rm if $i+j=k\pmod p$};\\
%       0 & \mbox{\rm otherwise.}
%     \end{array}\right.\]
% \end{proof}

\begin{lemma}
  \label{claim1}
  If $-1\le k\le p-2$ and $k\ne 0$, then $\dim_\K(\ker(\delta^2_{\rm
    cl})_k)=1$.
\end{lemma}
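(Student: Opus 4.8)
The plan is to compute the dimension of the kernel of the graded piece $(\delta^2_{\rm cl})_k$ by understanding how it acts on the basis $\{e^{i,j} : i+j \equiv k \pmod p\}$ of $(C^2_{\rm cl})_k(W)$, which has dimension $(p-1)/2$. A general element is $\phi = \sum_{i+j=k} a_{i,j} e^{i,j}$, and the cocycle condition $(\delta^2_{\rm cl})_k \phi = 0$ translates into a homogeneous linear system in the coefficients $a_{i,j}$. Since $\dim (C^3_{\rm cl})_k(W) = (p-1)(p-2)/6$, a naive rank count would not immediately give $\dim \ker = 1$, so the argument must exploit the specific structure of the Witt bracket $[e_i, e_j] = (j-i)e_{i+j}$.

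First I would write out $\delta^2_{\rm cl}\phi$ explicitly. For a classical $2$-cochain, $\delta^2_{\rm cl}\phi(e_r, e_s, e_t) = -\phi([e_r,e_s], e_t) + \phi([e_r,e_t], e_s) - \phi([e_s,e_t], e_r)$ (up to sign conventions), and substituting the bracket yields a linear combination of the $a_{i,j}$ with coefficients that are differences like $(s-r)$, $(t-r)$, $(t-s)$. Setting this to zero for each basis triple $e^{r,s,t}$ with $r+s+t \equiv k$ produces the defining equations for the kernel. The key observation I expect to need is a recursion: the cocycle equations should let me express $a_{i,j}$ for various $i,j$ in terms of a single free parameter, so that once one coefficient (say the one involving $e_{-1}$ or $e_0$) is fixed, all others are determined. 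This is where the hypothesis $k \ne 0$ enters, since the vanishing of certain bracket coefficients $(j-i)$ modulo $p$ — which can happen precisely when indices collide in the $k=0$ grading — is what would otherwise create extra freedom or degeneracy.

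The cleanest route is likely to produce an explicit nonzero cocycle $\phi_k \in (C^2_{\rm cl})_k(W)$ generating the kernel, thereby showing $\dim \ker \ge 1$, and then to prove the reverse inequality $\dim \ker \le 1$ by showing the cocycle equations force every solution to be a scalar multiple of $\phi_k$. One natural candidate comes from Lemma~\ref{onecoboundary}: since $H^1_{\rm cl}(W) = 0$ and $\delta^1_{\rm cl}(e^k)$ lands in $(C^2_{\rm cl})_k(W)$, the coboundary $\delta^1_{\rm cl}(e^k) = \sum_{i+j=k}(j-i)e^{i,j}$ is automatically a cocycle and is nonzero for $k \ne 0$ (one can check $(j-i)$ is not identically zero modulo $p$ on the relevant index pairs). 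This gives an explicit generator, and the problem reduces to showing the space of cocycles in degree $k$ is exactly one-dimensional.

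I expect the main obstacle to be the upper bound $\dim \ker \le 1$: establishing that the linear system coming from the triples $e^{r,s,t}$ has corank exactly one. The difficulty is bookkeeping over the cyclic structure, since $i+j$ is taken modulo $p$ and indices run over $\{-1, 0, \dots, p-2\}$, so one must carefully track when a putative index falls outside the range or when a bracket coefficient vanishes modulo $p$. The workable strategy is to pick a convenient family of triples — for instance those of the form $(r, s, t)$ where one index is fixed — to derive a chain of relations $a_{i,j} \mapsto a_{i',j'}$ linking all coefficients, verifying that the chain connects every basis element of $(C^2_{\rm cl})_k(W)$ and that no nontrivial cycle of relations imposes an additional constraint killing the free parameter. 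The condition $k \ne 0$ is precisely what guarantees these connecting coefficients are invertible in $\K$, so the relations propagate without degeneration and pin down the solution space to a single dimension.
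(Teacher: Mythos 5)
Your proposal follows essentially the same route as the paper: the lower bound comes from the nonzero coboundary $\delta^1_{\rm cl}(e^k)$, and the upper bound comes from evaluating the cocycle condition on triples with one fixed index (the paper uses $e_{0,i,j}$, yielding $ka_{i,j}-(j-i)a_{0,k}=0$, so $k\ne 0$ lets every $a_{i,j}$ be solved in terms of $a_{0,k}$). The only difference is that the paper's choice of the fixed index $0$ makes the ``chain of relations'' a single step, so the bookkeeping you anticipate never arises.
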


\begin{proof}
  Let $\phi=\sum a_{i,j}e^{i,j}\in (C^2_{\rm cl})_k(W)$. For $-1\le
  i<j\le p-2$, $i+j=k\pmod p$ and $i\ne 0$, we have
  \[(\delta^2_{\rm cl})_k\phi(e_{0,i,j})=ka_{i,j}-(j-i)a_{0,k}.\] If
  $k\ne 0$ and $\phi$ is a cocycle, then all coefficients $a_{i,j}$
  are determined by $a_{0,k}$ so that $\dim_\K(\ker(\delta^2_{\rm
    cl})_k)\le 1$. The rank of $(\delta^1_{\rm cl})_k$ is 1 so we
  must have $\dim_\K(\ker(\delta^2_{\rm cl})_k)=1$ as claimed.
\end{proof}

\begin{lemma}
  \label{claim2}
  $\dim_\K(\ker(\delta^2_{\rm cl})_0)=2$.
\end{lemma}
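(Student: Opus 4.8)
The plan is to coordinatize a degree‑zero $2$‑cochain by the data it carries on the index pairs that sum to zero, and to show the cocycle equations force this data into a $2$‑dimensional space. Since $\phi\in(C^2_{\rm cl})_0(W)$ satisfies $\phi(e_i,e_j)=0$ unless $i+j\equiv 0\pmod p$, it is completely determined by the scalars $f_i:=\phi(e_i,e_{-i})$ for $i\ne 0$ (indices read modulo $p$), which obey $f_{-i}=-f_i$. Thus $\phi$ is encoded by $f_1,\dots,f_{(p-1)/2}$, in agreement with $\dim_\K(C^2_{\rm cl})_0(W)=\tfrac{p-1}{2}$.

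First I would record how $\delta^2_{\rm cl}$ looks in these coordinates. Expanding $\delta^2_{\rm cl}\phi(e_{r,s,t})$ for a triple with $r+s+t\equiv0\pmod p$, using $[e_a,e_b]=(b-a)e_{a+b}$ together with $e_{r+s}=e_{-t}$, $e_{r+t}=e_{-s}$, $e_{s+t}=e_{-r}$ and $f_{-i}=-f_i$, yields
\[
\delta^2_{\rm cl}\phi(e_{r,s,t})=(t-r)f_s-(s-r)f_t-(t-s)f_r.
\]
The decisive contrast with Lemma~\ref{claim1} is that the triples $e_{0,i,j}$ which pinned down the case $k\ne 0$ now carry no information: setting $r=0$, $s=-t$ makes the right‑hand side vanish identically. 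Hence every constraint must come from triples with all three indices nonzero.

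The heart of the matter (and the step I expect to be the main obstacle) is to distill a workable recursion from these triples despite an unavoidable degeneracy. Applying the displayed identity to $(r,s,t)=(1,i,-1-i)$ gives
\[
(i-1)\,f_{i+1}=(2+i)\,f_i-(1+2i)\,f_1,
\]
so $f_{i+1}$ is determined by $f_i$ and $f_1$ whenever $i\ne 1$. This triple degenerates precisely when $i\equiv(p-1)/2$ (then $-1-i\equiv i$), so the recursion propagates cleanly only for $i=2,\dots,(p-3)/2$. The point that saves the dimension count is that this range already suffices: starting from $f_1,f_2$ it determines $f_3,\dots,f_{(p-1)/2}$, and the oddness $f_{-i}=-f_i$ then recovers $f_{(p+1)/2},\dots,f_{p-1}$ with no further input, since these indices are exactly the negatives of $1,\dots,(p-1)/2$ (for $p=5$ the range is empty and oddness alone suffices). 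Therefore the linear map $\ker(\delta^2_{\rm cl})_0\to\K^2$, $f\mapsto(f_1,f_2)$, is injective, giving $\dim_\K\ker(\delta^2_{\rm cl})_0\le 2$.

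For the matching lower bound I would exhibit two independent solutions. The choice $f_i=i$ coincides up to a nonzero scalar with $\delta^1_{\rm cl}(e^0)$ (which has $f_i=-2i$, by Lemma~\ref{onecoboundary}), hence is automatically a cocycle; equivalently $(t-r)s-(s-r)t-(t-s)r=0$ identically. The choice $f_i=i^3$ satisfies the displayed identity whenever $r+s+t\equiv0$, as a short computation (substitute $t\equiv-r-s$ and expand) confirms. These two odd functions are linearly independent on the nonzero residues for $p>3$ (compare their values at $i=1,2$), so $\dim_\K\ker(\delta^2_{\rm cl})_0\ge 2$, and combining the bounds proves the claim.
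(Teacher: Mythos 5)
Your proof is correct and follows essentially the same route as the paper: an upper bound from a three-term recursion obtained by evaluating $\delta^2_{\rm cl}\phi$ on triples anchored at $\pm 1$ (your $(1,i,-1-i)$ versus the paper's $e_{-1,j,p-j+1}$), plus the same two spanning cocycles, since $\{i,i^3\}$ and the paper's $\{-2n,\,n(n^2-4)/3\}$ span the same space. The only real difference is cosmetic but welcome: you verify the cubic cocycle directly via the identity $(t-r)s^3-(s-r)t^3-(t-s)r^3=-(s-r)(t-r)(t-s)(r+s+t)$ rather than by solving the recursion and checking afterward.
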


\begin{proof}
  Let
  \[\phi=a_{-1,1}e_{-1,1}+\sum_{i=2}^{(p-1)/2}
  a_{i,p-i}e^{i,p-i}\in C^2_0\] be a cochain. If $\phi$ is a cocycle,
  then for $3\le j \le (p-1)/2$ we must have
  \[\delta^2_0\phi(e_{-1,j,p-j+1})=(j+1)a_{j-1,p+j-1}+(p-j+2)a_{j,p-j}+(2j-1)a_{-1,1}=0.\]
  Shifting the index with $n=j+2$, we have for $1\le n\le (p-5)/2$,
  % \[(n+3)a_{n+1,p-n+1}+(p-n)a_{n+2,p-n-2}+(2n+3)a_{-1,1}=0.\]
  \begin{equation}\label{recursion}
    n a_{n+2,p-n-2} = (n+3)a_{n+1,p-n-1}+(2n+3)a_{-1,1}.
  \end{equation}
  This shows recursively that $a_{n+2,p-n-2}$ is determined by
  $a_{-1,1}$ and $a_{2,p-2}$, and therefore
  $\dim_\K(\ker\delta^2_0)\le 2$.  If we set $a_{-1,1}=1$ and
  $a_{2,p-2}=0$, then (\ref{recursion}) reduces to
  \[n a_{n+2,p-n-2} = (n+3)a_{n+1,p-n-1}+(2n+3).\] This recursion
  equation has the solution
  \[a_{n+2,p-n-2}=\frac{1}{3} n (n+2)(n+4),\] and therefore
  \begin{align}
    \label{virasoro}
    \phi_{1,0} & =\sum_{n=1}^{(p-1)/2} \frac{ n(n^2-4)}{3} e^{n,p-n}.
  \end{align}
  Now, for any basis vector $e_{i,j,p-i-j}\in (\wedge^3 W)_0$, we have
  by definition
  \[\delta^2_{\rm cl}\phi_{1,0}(e_{i,j,p-i-j})
  =\phi_{1,0}((j-i)e_{i+j,p-i-j} - (p-2i-j) e_{p-j,j}+(p-i-2j)
  e_{p-i,i}).\] This together with (\ref{virasoro}) shows $\phi_{1,0}$
  is a cocycle. If we set $a_{-1,1}=2$ and $a_{2,p-2}=p-4$, then
  (\ref{recursion}) reduces to the recursion equation
  \[n a_{n+2,p-n-2} = (n+3)a_{n+1,p-n-1}+(4n+6),\] which has the
  solution
  \[a_{n+2,p-n-2}=-2(n+2).\] So by Lemma~\ref{onecoboundary} we have
  \begin{align*}
    \phi_{2,p-4} & = \sum_{n=1}^{(p-1)/2} -2n e^{n,p-n}=\delta^1(e^0)
  \end{align*}
  showing $\phi_{2,p-4}$ is also a cocycle. Clearly $\phi_{2,p-4}$
 is not a multiple of $\phi_{1,0}$ because $p-4\ne 0$, which means
 $\dim_\K(\ker\delta^2_0)\ge 2$.
\end{proof}

\begin{theorem}
  $\dim_\K(H^2_{\rm cl}(W))=1$ and the cocycle $\phi_{1,0}$ in
  Lemma~\ref{claim2} generates $H^2_{\rm cl}(W)$.
\end{theorem}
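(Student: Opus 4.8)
The plan is to exploit the grading. Since $\delta^1_{\rm cl}$ and $\delta^2_{\rm cl}$ are graded maps, the cohomology splits as $H^2_{\rm cl}(W)=\bigoplus_{k=-1}^{p-2}H^2_k$ with $H^2_k=\ker(\delta^2_{\rm cl})_k/\im(\delta^1_{\rm cl})_k$, so it suffices to compute $\dim_\K H^2_k$ in each grade by rank--nullity. Lemmas~\ref{claim1} and~\ref{claim2} already give $\dim_\K\ker(\delta^2_{\rm cl})_k$ for every $k$ (namely $1$ for $k\ne 0$ and $2$ for $k=0$), so the only missing ingredient is the rank of $(\delta^1_{\rm cl})_k$.

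I would pin down this rank uniformly. Each $(C^1_{\rm cl})_k(W)$ is spanned by the single vector $e^k$, so $(\delta^1_{\rm cl})_k$ has rank $0$ or $1$, with rank $1$ precisely when $\delta^1_{\rm cl}(e^k)\ne 0$. By Lemma~\ref{onecoboundary}, $\delta^1_{\rm cl}(e^k)=\sum_{i+j=k}(j-i)e^{i,j}$, and for every admissible pair $-1\le i<j\le p-2$ we have $1\le j-i\le p-1$, so each coefficient $j-i$ is nonzero modulo $p$. As each grade is nonempty (its dimension is $(p-1)/2\ge 1$), it follows that $\delta^1_{\rm cl}(e^k)\ne 0$, and hence $\mathrm{rank}\,(\delta^1_{\rm cl})_k=1$ for all $k$. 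At $k=0$ the computation in Lemma~\ref{claim2} moreover identifies $\delta^1_{\rm cl}(e^0)=\phi_{2,p-4}$.

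Assembling the count finishes the proof. For $k\ne 0$, the one-dimensional image of $(\delta^1_{\rm cl})_k$ sits inside the one-dimensional kernel $\ker(\delta^2_{\rm cl})_k$ (as $\delta^2_{\rm cl}\delta^1_{\rm cl}=0$), so $H^2_k=0$. For $k=0$, the kernel is spanned by $\phi_{1,0}$ and $\phi_{2,p-4}$, while the image is the line through the coboundary $\phi_{2,p-4}=\delta^1_{\rm cl}(e^0)$; since $\phi_{1,0}$ is not a multiple of $\phi_{2,p-4}$, the quotient $H^2_0$ is one-dimensional and represented by the class of $\phi_{1,0}$. Summing over grades gives $\dim_\K H^2_{\rm cl}(W)=1$ with $\phi_{1,0}$ as generator.

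I do not expect a serious obstacle here, since the delicate computations already reside in Lemmas~\ref{claim1} and~\ref{claim2}; the remaining work is purely the image--kernel bookkeeping above. The one point needing care is the uniform rank claim for $\delta^1_{\rm cl}$, which rests on the elementary fact that $j-i$ never vanishes modulo $p$ over the allowed index range, together with the identification at $k=0$ of the coboundary direction as $\phi_{2,p-4}$ and its independence from $\phi_{1,0}$.
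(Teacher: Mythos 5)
Your proposal is correct and follows essentially the same route as the paper: the paper's own proof is a one-line assembly of Lemmas~\ref{claim1} and~\ref{claim2} via the grading, with the rank-one claim for $(\delta^1_{\rm cl})_k$ already invoked inside the proof of Lemma~\ref{claim1} and the identification $\delta^1_{\rm cl}(e^0)=\phi_{2,p-4}$ carried out in Lemma~\ref{claim2}. You simply make the image--kernel bookkeeping explicit, which matches the intended argument.
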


\begin{proof}
  Lemma~\ref{claim1} shows that $(H^2_{\rm cl})_k(W)=0$ if $k\ne 0$
  and Lemma~\ref{claim2} shows that $(H^2_{\rm cl})_0(W)$ is one
  dimensional with $\phi_{1,0}$ spanning the non-zero cohomology
  class.
\end{proof}

\section{Restricted Lie Algebra Cohomology of $W$}
Since $\delta^0=\delta^0_{\rm cl}$ and the restricted coboundary
operator $\delta^1$ is also injective, we have $H^0(W)=\K$ and
$H^1(W)=0$.

\begin{lemma}
  \label{inducediszero}
  If $(\phi,\omega)\in C^2(W)$, then $\ind^2(\phi,\omega)=0$ and hence
  $(\phi,\omega)\in\ker \delta^2$ if and only if $\phi\in\ker
  \delta^2_{\rm cl}$.
\end{lemma}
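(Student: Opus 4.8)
The plan is to prove the stronger statement that $\ind^2(\phi,\omega)$ is the zero map for every $(\phi,\omega)\in C^2(W)$; the asserted equivalence then follows at once, since $\delta^2(\phi,\omega)=(\delta^2_{\rm cl}\phi,\ind^2(\phi,\omega))$ and the second coordinate is now always zero, so $(\phi,\omega)\in\ker\delta^2$ iff $\delta^2_{\rm cl}\phi=0$, i.e.\ iff $\phi\in\ker\delta^2_{\rm cl}$. To evaluate $\ind^2(\phi,\omega)(g,h)$ I would first exploit the special $p$-structure of $W$. Because $h^{[p]}=\gamma(h)h$, the first term is $\phi(g,h^{[p]})=\gamma(h)\phi(g,h)$, while the right-nested bracket convention gives $[g,\underbrace{h,\dots,h}_{p-1}]=(-\ad h)^{p-1}(g)=(\ad h)^{p-1}(g)$, the last equality because $p-1$ is even. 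Setting $z=(\ad h)^{p-1}(g)-\gamma(h)g$ and using bilinearity of $\phi$, the two copies of $\gamma(h)\phi(g,h)$ cancel and
\[\ind^2(\phi,\omega)(g,h)=-\phi(z\wedge h).\]
Everything thus reduces to the Lie-theoretic claim that $z\in\K h$, for then $\phi(z\wedge h)=0$ by skew-symmetry.

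Applying $\ad h$ to $z$ and invoking the restricted axiom $(\ad h)^p=\ad(h^{[p]})=\gamma(h)\ad h$ shows $(\ad h)z=0$, so $z$ lies in the centralizer $C_W(h)=\ker(\ad h)$. It therefore suffices to understand this centralizer, and the natural tool is the realization $W=\Der(A)$ with $A=\K[x]/(x^p-1)$. In characteristic $p$ we have $x^p-1=(x-1)^p$, so with $t=x-1$ the ring $A$ is the local ring $\K[t]/(t^p)$; every $h\in W$ has the form $h=f\partial$ with $f\in A$, and $\ad h$ corresponds to the operator $L_f(u)=fu'-f'u$ on $A$, whose kernel is $C_W(h)$.

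The decisive invariant is the order of vanishing $m=\mathrm{ord}_t(f)$. If $m\le 1$ then $\ker L_f$ is one-dimensional: for $m=0$ division by the unit $f$ identifies it with $\ker\partial=\K$, and for $m=1$ a leading-coefficient computation forces every nonzero kernel element to have $t$-order $1$; in both cases $\ker L_f=\K f$, so $C_W(h)=\K h$ and $z\in\K h$. If $m\ge 2$, then $L_f$ strictly raises the $t$-order, so it is nilpotent (in particular $\gamma(h)=0$) and, since the subspace $t^{p-m+1}A$ (of dimension $m-1$) together with $f$ gives $\dim_\K\ker L_f\ge 2$, $L_f$ has at least two Jordan blocks on the $p$-dimensional space $A$; hence $L_f^{p-1}=0$ and $z=(\ad h)^{p-1}(g)=0\in\K h$. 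In every case $\phi(z\wedge h)=0$, completing the proof.

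I expect the content of the third paragraph to be the main obstacle. The reduction to $-\phi(z\wedge h)$ is formal, and one can check $z\in\K h$ directly on basis vectors $h=e_b$ (where $\gamma(e_b)\in\{0,1\}$ and $(\ad e_b)^{p-1}$ is read off from $[e_b,e_j]=(j-b)e_{b+j}$); but establishing $z\in\K h$ for an \emph{arbitrary} $h$ genuinely uses the structure of $W$. The delicate point hidden in the order dichotomy is the exclusion of a nonzero semisimple element ($\gamma(h)\ne0$) with centralizer of dimension at least $2$; the local-ring model reduces this, together with the companion nilpotent case, to an elementary analysis of how $L_f$ shifts the $t$-adic order.
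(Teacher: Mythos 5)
Your proposal is correct, and its first two paragraphs follow the paper's reduction exactly: both arguments use $h^{[p]}=\gamma(h)h$ to cancel the first term and reduce everything to showing that $z=(\ad h)^{p-1}(g)-\gamma(h)g$ pairs to zero with $h$ under $\phi$, after observing that $[z,h]=0$. Where you genuinely diverge is in the key step. The paper disposes of $z$ by a soft argument: $W$ has rank one, so for $h$ in a nonempty Zariski open set the centralizer of $h$ is $\K h$, whence $z\in\K h$ and $z\wedge h=0$ there; the identity $[g,h,\dots,h]\wedge h=g\wedge h^{[p]}$ is polynomial in $h$, so it extends from this dense set to all of $W$. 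You instead compute the centralizer of an arbitrary $h=f\partial$ explicitly in the local model $A\cong\K[t]/(t^p)$, stratifying by $m=\mathrm{ord}_t(f)$: for $m\le 1$ you show $\ker L_f=\K f$ directly, and for $m\ge 2$ you show $L_f$ is nilpotent with at least two Jordan blocks, so $(\ad h)^{p-1}=0$ and $\gamma(h)=0$, making $z=0$ outright. The paper's route is shorter and never needs to know what happens at the degenerate elements, but it leans on the unproved rank-one assertion and leaves the algebraicity/density extension implicit; your route is longer and entirely elementary, is self-contained, and explains concretely why the non-generic $h$ cause no trouble. Two small points you should tighten: the parenthetical ``in particular $\gamma(h)=0$'' deserves a one-line justification (e.g.\ once $L_f^{p-1}=0$ you get $\gamma(h)\ad h=(\ad h)^p=0$ and $\ad h\ne 0$ since $W$ is centerless), and in the dimension count for $m\ge 2$ you should note that when $m\ge p-m+1$ the element $f$ already lies in $t^{p-m+1}A$ and contributes no extra dimension --- but then $m-1\ge 2$ anyway since $p\ge 5$, so the conclusion $\dim_\K\ker L_f\ge 2$ survives in every case.
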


\begin{proof} 
  It suffices to show that for all $g,h\in W$,
  \[g\wedge h^{[p]}-[g,h,\cdots,h]\wedge h=0,\] where, in this proof,
  the bracket $[g,h,\dots, h]$ always means $p-1$ factors of $h$.

  For $g,h\in W$, we have \[g\wedge h^{[p]}=g\wedge \gamma(h)h=\gamma
  (h)(g\wedge h).\] The algebra $W$ is of rank one so there is a
  nonempty Zariski open subset $U\subset W$ such if $y\in U$, $x\in W$
  and $[x,y]=0$, then $x\in\K y$. Moreover,
  \[[[g,h,\dots, h],h]=[g,h^{[p]}]=[g,\gamma (h)h]=[\gamma (h)g,h]\]
  so that
  \[[[g,h,\dots, h]-\gamma (h)g,h]=0.\] This shows that there is a
  scalar $\gamma'(g,h)\in\K$ with
  \[[g,h,\dots, h]-\gamma (h)g = \gamma'(g,h)h,\] at least for $h\in
  U$. However, the mapping $g\mapsto [g,h,\dots, h]-\gamma (h)g$ is
  algebraic, so
  \[[g,h,\dots, h]=\gamma (h)g+\gamma'(g,h) h\] for all $h\in W$ and
  hence
  \[[g,h,\dots, h]\wedge h=\gamma (h)(g\wedge
  h)+\gamma'(g,h) (h\wedge h) = \gamma (h)(g\wedge h)\] proving the
  lemma.
\end{proof}

If $\phi_{1,0}\in C^2_{\rm cl}(W)$ is the cocycle from
Lemma~\ref{claim2} and $\omega:W\to\K$ is any map with the
$*$-property with respect to $\phi_{1,0}$, Lemma~\ref{inducediszero}
implies that $(\phi_{1,0},\omega)\in C^2(W)$ is a restricted
cocycle. For $-1\le i\le p-2$, let $\omega_i:W\to\K$ be defined by
\[\omega_i(\alpha_{-1}e_{-1}+\cdots +
\alpha_{p-2}e_{p-2})=\alpha_i^p.\]

\begin{lemma}
  \label{basis}
  For $-1\le i\le p-2$, the map $\omega_i$ has the $*$-property with
  respect to $0$ and $(0,\omega_i)\in C^2(W)$ is a cocycle. Moreover,
  if $\omega:W\to\K$ is any map with the $*$-property with respect to
  $\phi_{1,0}$, the cohomology classes represented by the
  $(0,\omega_i)$ and $(\phi_{1,0},\omega)$ comprise a linearly
  independent subset in $H^2(W)$.
\end{lemma}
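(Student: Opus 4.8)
The plan is to establish the three assertions in turn. The first two are routine. For the $*$-property with respect to $0$, I would invoke the remark following~(\ref{starprop}): this property is precisely $p$-semilinearity. Writing $g=\sum_j\alpha_j e_j$ and $h=\sum_j\beta_j e_j$, both required identities follow by direct computation, namely $\omega_i(\lambda g)=(\lambda\alpha_i)^p=\lambda^p\omega_i(g)$ and $\omega_i(g+h)=(\alpha_i+\beta_i)^p=\alpha_i^p+\beta_i^p=\omega_i(g)+\omega_i(h)$, the middle equality being the Frobenius identity in characteristic $p$. That $(0,\omega_i)$ is a cocycle is then immediate from Lemma~\ref{inducediszero}: we have $\delta^2(0,\omega_i)=(\delta^2_{\rm cl}0,\ind^2(0,\omega_i))$, and $\delta^2_{\rm cl}0=0$ trivially while $\ind^2(0,\omega_i)=0$ by that lemma, which asserts $\ind^2$ vanishes identically on $C^2(W)$.

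The substantive step is the linear independence, which I would handle through the induced $\K$-linear map $\pi\colon H^2(W)\to H^2_{\rm cl}(W)$ sending $[(\phi,\omega)]$ to $[\phi]$, introduced in Section~3. Suppose $d[(\phi_{1,0},\omega)]+\sum_i c_i[(0,\omega_i)]=0$ in $H^2(W)$. Applying $\pi$ annihilates every $[(0,\omega_i)]$ and leaves $d[\phi_{1,0}]=0$; since $\phi_{1,0}$ generates the one-dimensional group $H^2_{\rm cl}(W)$ and hence is not a coboundary, this forces $d=0$. It remains to show that $(0,\sum_i c_i\omega_i)$ being a coboundary $\delta^1\psi=(\delta^1_{\rm cl}\psi,\ind^1\psi)$ forces every $c_i=0$. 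The first-component equation $\delta^1_{\rm cl}\psi=0$ gives $\psi=0$, because $\delta^1_{\rm cl}$ is injective (as $H^1_{\rm cl}(W)=0$ and $\delta^0_{\rm cl}=0$). The second-component equation then reads $\sum_i c_i\omega_i=\ind^1\psi=0$ as a map on $W$; evaluating on each basis vector $e_j$, where $\omega_i(e_j)$ is $1$ for $i=j$ and $0$ otherwise, yields $c_j=0$ for all $j$.

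I expect the linear independence to be the main obstacle, and within it the delicate points are largely bookkeeping: one must verify that $\pi$ is well defined and $\K$-linear, so that $d[\phi_{1,0}]=0$ genuinely forces $d=0$, and one must observe that ordinary linear independence of the $\omega_i$ as set maps suffices here even though each $\omega_i$ is only $p$-semilinear. Both follow directly once the reductions above are arranged, so I anticipate no serious difficulty beyond organizing them carefully.
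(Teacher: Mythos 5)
Your proposal is correct and follows essentially the same route as the paper: $p$-semilinearity of $\omega_i$ via the Frobenius identity, the cocycle condition from Lemma~\ref{inducediszero}, and linear independence by first killing the $\phi_{1,0}$-coefficient using the fact that $\phi_{1,0}$ is not an ordinary coboundary (you phrase this via the induced map $H^2(W)\to H^2_{\rm cl}(W)$, the paper argues directly on the first component of $\delta^1\psi$), then using injectivity of $\delta^1_{\rm cl}$ to force $\psi=0$ and evaluating $\sum_i c_i\omega_i$ on the basis vectors $e_j$. The only cosmetic difference is that the paper also records linear independence at the cochain level before passing to cohomology, which your argument does not need.
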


\begin{proof}
  If $-1\le i\le p-2$, an easy computation shows that $\omega_i$ is
  $p$-semilinear and hence has the $*$-property with respect to
  $0$. Therefore $(0,\omega_i)\in C^2(W)$ and
  $\delta^2(0,\omega_i)=(0,0)$ by Lemma~\ref{inducediszero}. If
  $\alpha_i,\beta\in\K$ and
  \[\sum
  \alpha_i(0,\omega_i)+\beta(\phi_{1,0},\omega)=\left(\beta\phi_{1,0},\sum
    \alpha_i \omega_i +\beta\omega\right )=(0,0),\] then $\beta=0$,
  and evaluating at $(0,e_j)$ shows $\alpha_j=0$. Therefore
  \[{\cal B}=\{(0,\omega_{-1}),\dots,
  (0,\omega_{p-2}),(\phi_{1,0},\omega)\}\] is a linearly independent
  set in $C^2(W)$. Moreover, if
  \[\sum
  \alpha_i(0,\omega_i)+\beta(\phi_{1,0},\omega)=\left(\beta\phi_{1,0},\sum
    \alpha_i \omega_i +\beta\omega\right )=\delta^1\psi=(\delta^1_{\rm
    cl}\psi,\ind^1\psi)\] for some $\psi\in C^1(W)$, then $\beta=0$,
  otherwise $\phi_{1,0}\in\im\delta^1_{\rm cl}$. Therefore
  $\delta^1_{\rm cl}\psi = 0$ so that $\psi=0$ and hence $\alpha_i=0$
  showing $\cal B$ is linearly independent in $H^2(W)$.
\end{proof}

\begin{proof}[Proof of Theorem~\ref{maintheorem}]
  For $i=-1, \dots, p-2$, let $\phi_i=\delta^1_{\rm cl}(e^i)$ and let
  $\phi_{p-1}=\phi_{1,0}$ so that $\{\phi_{-1},\dots,
  \phi_{p-2},\phi_{p-1}\}$ is a basis for $\ker(\delta^2_{\rm cl})$ by
  Lemmas~\ref{claim1} and \ref{claim2}. For each $i$, choose a map
  $\xi_i:W\to\K$ that has the $*$-property with respect to $\phi_i$ so
  by Lemmas~\ref{inducediszero} and \ref{basis}
  \[\{(\phi_{-1},\xi_{-1}),\dots,
  (\phi_{p-1},\xi_{p-1}),(0,\omega_1),\dots, (0,\omega_p)\}\] is a
  linearly independent subset of $\ker\delta^2$. If
  $(\phi,\omega)\in\ker\delta^2$, then $\phi\in\ker\delta^2_{\rm cl},$
  so there are scalars $\alpha_i$ ($-1\le i\le p-1$) such that
  $\phi=\sum \alpha_i\phi_i$. If $\xi=\sum\alpha_i\xi_i$, then $\sum
  \alpha_i(\phi_i,\xi_i)=(\phi,\xi)$. We have $(0,\omega-\xi)\in
  C^2(W)$ which means there are scalars $\beta_j$ ($1\le j\le p$) such
  that $\omega-\xi=\sum\beta_j\omega_j$. Therefore
  \[(\phi,\omega)=(\phi,\xi)+\sum\beta_j(0,\omega_j)=\sum\alpha_i(\phi_i,\xi_i)+\sum\beta_j(0,\omega_j).\]
  This shows
  \[\{(\phi_{-1},\xi_{-1}),\dots,
  (\phi_{p-1},\xi_{p-1}),(0,\omega_1),\dots, (0,\omega_p)\}\] is a
  basis for $\ker\delta^2$ and hence $\dim_\K\ker\delta^2=2p+1$. We
  have already seen that $\dim_\K\im\delta^1=p$ so $\dim_\K
  H^2(W)=p+1$, and from Lemma~\ref{basis} it follows that the
  cohomology classes represented by $(\phi_{p-1},\xi_{p-1}),
  (0,\omega_1),\dots, (0,\omega_p)$ form a basis for
  $H^2(W)$. Finally, the subspace spanned by the cohomology classes of
  $(0,\omega_i)$ is clearly $p$ dimensional, and the ordinary
  (non-restricted) one-dimensional central extensions of $W$
  corresponding to these cohomology classes are trivial as ordinary
  Lie algebra extensions.
\end{proof}

We conclude this section with explicit descriptions of the $p+1$
one-dimensional restricted central extensions of $W$. For $-1\le i\le
p-2$, let $E_i$ denote the one-dimensional restricted central
extension of $W$ determined by the cohomology class of the cocycle
$(0,\omega_i)$. Then $E_i=W\oplus \K c$ as a $\K$-vector space, and
using (\ref{eq:2}) we have for all $-1\le j,k\le p-2$,
\begin{align*}
  [e_j,e_k] & =(k-j) e_{j+k};\\
  [e_j, c] & = 0;\\
  e_{j}^{[p]} & = \delta_{0,j}e_0 + \delta_{i,j} c;\\
  c^{[p]} & = 0,
\end{align*}
where $\delta$ denotes Kronecker's delta-function.

Let us denote by $\phi$ the cocycle $\phi_{1,0}$ given in
(\ref{virasoro}) and define $\omega:W\to\K$ to have the $*$-property
with respect to $\phi$ using (\ref{starprop}) and declaring
$\omega(e_j)=0$ for all $-1\le j\le p-2$. Note that $\omega\ne 0$, but
$\omega(0)=0$ by (\ref{starprop}). Now, for $-1\le j,k\le p-2$,
(\ref{virasoro}) gives
\[\phi(e_{j,k})=\frac{j(j^2-4)}{3}\delta_{0,j+k}.\]
Therefore, if $E=W\oplus \K c$ denotes the one-dimensional central
extension of $W$ determined by the cocycle $(\phi,\omega)$, we have
for all $-1\le j,k\le p-2$,
\begin{align*}
  [e_j,e_k] & =(k-j) e_{j+k}+\frac{j(j^2-4)}{3}\delta_{0,j+k} c;\\
  [e_j, c] & = 0;\\
  e_{j}^{[p]} & = \delta_{0,j}e_0;\\
  c^{[p]} & = 0.
\end{align*}
The Lie bracket in the extension $E$ is similar to the bracket in the
(characteristic zero) Virasoro algebra (\cite{Schottenloher},
Def.~5.2) insofar as the coefficients in both corresponding cocycles
are given by cubic polynomials of the same form. For this reason, it
is natural to refer to the extension $E$ as the (restricted) modular
Virasoro algebra.

\bibliography{references}{} \bibliographystyle{plain}

\begin{thebibliography}{10}

\bibitem{Block}
R.E. Block.
\newblock On the {M}ills-{S}eligman axioms for {L}ie algebras of classical
  type.
\newblock {\em Trans. Amer. Math. Soc.}, 2(121):378--392, February 1966.

\bibitem{EvansFuchs1}
T.J. Evans and D.B. Fuchs.
\newblock On the structure of restricted {L}ie algebra for the {W}itt {L}ie
  algebra in finite characteristic.
\newblock {\em Functional Analysis and Its Applications}, 36(2):1400144, 2002.

\bibitem{EvansFuchs2}
T.J. Evans and D.B. Fuchs.
\newblock A complex for the cohomology of restricted {L}ie algebras.
\newblock {\em Journal of Fixed Point Theory and Applications}, 3(1):159--179,
  May 2008.

\bibitem{FuchsBook}
D.B. Fuchs.
\newblock {\em Cohomology of Infinite Dimensional {L}ie Algebras}.
\newblock Consultants Bureau, 1986.

\bibitem{FuchsGelfand}
D.B. Fuchs and I.M. Gelfand.
\newblock Cohomology of the {L}ie algebra of vector fields on a circle.
\newblock {\em Functional Analysis and Its Applications}, 2:342--343, 1968.

\bibitem{Humphreys3}
J.E. Humphreys.
\newblock Existence of {L}evi factor in certain algebraic groups.
\newblock {\em Pacific Joural of Mathematics}, 23:543--546, 1967.

\bibitem{Humphreys2}
J.E. Humphreys.
\newblock Restricted {L}ie algebras (and beyond).
\newblock {\em Contemporary Mathematics}, (13):91--98, 1982.

\bibitem{Jacobson0}
N.~Jacobson.
\newblock Abstract derivations and {L}ie algebras.
\newblock {\em Trans. Amer. Math. Soc.}, (42):206--274, 1937.

\bibitem{Jacobson}
N.~Jacobson.
\newblock Restricted {L}ie algebras of characteristic $p$.
\newblock {\em Trans. Amer. Math. Soc.}, 50:15--25, 1941.

\bibitem{Jantzen}
J.C. Jantzen.
\newblock {\em Representations of Algebraic Groups}, volume 107 of {\em
  Mathematical surveys and monographs}.
\newblock American Mathematical Society, 2nd edition, 2003.

\bibitem{Mcninch}
G.J. McNinch.
\newblock {L}evi decompositions of a linear algebraic group.
\newblock {\em Transformation Groups}, 15:937--964, 2010.

\bibitem{Schottenloher}
M.~Schottenloher.
\newblock The {V}irasoro algebra.
\newblock {\em Lecture Notes in Physics}, 759:75--85, 2008.

\bibitem{Weinstein}
F.V. Weinstein.
\newblock Filtering bases: A tool to compute cohomologies of abstract
  subalgebras of the {W}itt algebra.
\newblock {\em Advances in Soviet Mathematics}, 17:155--216, 1993.

\bibitem{Yakovlev}
N.N. Yakovlev.
\newblock Cohomologies of {W}itt algebras.
\newblock {\em Functional Analysis and Its Applications}, 9(3):269--271,
  July-September 1975.

\end{thebibliography}
\end{document}